\theoremstyle{theorem}
\newtheorem{theorem}{{\sc Theorem}}[section]
\newtheorem{cor}[theorem]{{\sc Corollary}}
\theoremstyle{definition}
\newtheorem{ex}[theorem]{{\sc Example}}
\newtheorem{rem}[theorem]{{\sc Remark}}
\title{A note about irreducibility of a resultant}
\author{Beata Hejmej\footnote{e-mail: bhejmej1f@gmail.com, Department of Mathematics, Pedagogical University of Cracow, Podchor\k{a}\.zych~2, PL-30-084 Kraków, POLAND}}
\begin{document}
\maketitle
\begin{abstract}
We present a theorem about irreducibility of  a polynomial that is the  resultant of two others polynomials.~The~proof of this fact is based on the field theory.~We also consider the converse theorem and some examples.\\ \\
{\bf Keywords:} Galois theory, separable extension,  embedding, polynomial, irreducibility, resultant.\\ \\
{\bf AMS Subject Classification:} 12F10, 12E05, 13P15
\end{abstract}
\section{Preliminaries}
At the beginning,  we recall some basic definitions and facts from the field theory. 

Every nonzero homomorphism of fields is called an {\it embedding}.~For a field extension $F<E$ and an~embedding $\sigma\colon F \hookrightarrow L$, an embedding $\bar{\sigma}\colon E \hookrightarrow L$ such that $\bar{\sigma}|_F=\sigma$ is called an~{\it extension} of $\sigma$. An extension of the identity map $F \hookrightarrow F<L$ is called an $F$--{\it embedding}. 

If $\sigma \colon F \hookrightarrow E$ is an embedding and $f=a_n X^n+\cdots +a_0 \in F[X]$, then we set  $f^{\sigma} := \sigma(a_n) X^n + \cdots +\sigma(a_0) \in E[X]$.

We say that $E$ is a {\it splitting field over} $F$ of a family $\mathcal{F}\subset F[X]$  if  every polynomial $f\in \mathcal{F}$ splits over $E$  and $E=F(S)$, where $S$ is the set of all roots of polynomials from the family $\mathcal{F}$ (we assume that $S\subset \bar{F}$, the algebraic closure of $F$).

An algebraic extension $F<E$, where $E<\bar{F}$, is said to be  {\it normal}  if $E$ is a splitting field of some family $\mathcal{F}\subset F[X]$. In this situation we also say that $E$ is {\it normal over} $F$.

Consider a field  extension $F<E$. The set ${\rm Gal}(E/F)$ of all $F$--automorphisms of $E$  is a~group under the composition of mappings, which we call the {\it Galois group of the~extension} $F<E$.

Let $f$ be a  polynomial over a field $F$. We define the {\it Galois group of the polynomial} $f$ as the  Galois group of the extension $F<L_f$, where $L_f$ is the splitting field of $f$. We denote this group by ${\rm Gal}(f)$. It acts on the set $Z_f$ of all roots of $f$ by an obvious way.

The following theorem collects some well known properties of extension of fields, all of which can be found in~\cite{roman}.

\begin{theorem}
Assume that $F<E$ is an algebraic field extension and  $L$  is an algebraically closed field.~Then:
\begin{itemize}

\item[{\rm (i)}] Let $\alpha \in E$ and  $m_{\alpha,F}$ be the minimal polynomial of $\alpha$ over $F$. If $\sigma\colon F \hookrightarrow L$ is an embedding and  $\beta \in L$ is a root of  $m_{\alpha, F}^{\sigma}$, then $\sigma$ can be extended to an embedding $\bar{\sigma}\colon E \hookrightarrow L$ such that $\bar{\sigma}(\alpha)=\beta$. 

\item[{\rm (ii)}] If  $E<\bar{F}$, then $F<E$ is normal if and only  if  every $F$--embedding $E \hookrightarrow \bar{F}$ is an automorphism of $E$. 
\end{itemize}\label{aa}
\end{theorem}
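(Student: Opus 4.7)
The plan is to prove (i) first and then use it as a tool for (ii).

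For (i), I would reduce to the simple-extension case $E=F(\alpha)$ and then bootstrap by Zorn's lemma. In the simple case, the identification $F(\alpha)\cong F[X]/(m_{\alpha,F})$ lets me define the extension by $f(\alpha)\mapsto f^{\sigma}(\beta)$; this is well defined and injective because the kernel of the map $F[X]\ni f\mapsto f^{\sigma}(\beta)\in L$ is an ideal containing $m_{\alpha,F}$ (since $\beta$ is a root of $m_{\alpha,F}^{\sigma}$), hence equals $(m_{\alpha,F})$ by maximality. For the general algebraic case I would apply Zorn's lemma to the poset of pairs $(K,\tau)$ with $F(\alpha)\le K\le E$ and $\tau\colon K\hookrightarrow L$ extending the map just constructed on $F(\alpha)$, ordered by inclusion and restriction. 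Any chain has an upper bound via the obvious union, so there is a maximal element $(K_0,\tau_0)$; it must satisfy $K_0=E$, because otherwise for $\gamma\in E\setminus K_0$ the algebraic closedness of $L$ would supply a root of $m_{\gamma,K_0}^{\tau_0}$ in $L$, and the simple-extension construction applied to $\tau_0$ would enlarge $\tau_0$ to $K_0(\gamma)$, contradicting maximality.

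For (ii)$(\Rightarrow)$, suppose $E$ is a splitting field over $F$ of some family $\mathcal{F}\subset F[X]$, with root set $S$, and let $\sigma\colon E\hookrightarrow\bar F$ be an $F$-embedding. Since $\sigma$ fixes $F$, $f^{\sigma}=f$ for every $f\in\mathcal{F}$, so $\sigma$ maps the finite set $Z_f\subset E$ into itself; injectivity forces a bijection, and therefore $\sigma(S)=S$, so $\sigma(E)=\sigma(F(S))=F(S)=E$. For $(\Leftarrow)$, take $\mathcal{F}=\{m_{\alpha,F}\colon\alpha\in E\}$. For any root $\beta\in\bar F$ of $m_{\alpha,F}$, part (i) applied with $\sigma=\mathrm{id}_F$ and $L=\bar F$ produces an $F$-embedding $\bar\sigma\colon E\hookrightarrow\bar F$ with $\bar\sigma(\alpha)=\beta$; by hypothesis $\bar\sigma\in{\rm Gal}(E/F)$, so $\beta=\bar\sigma(\alpha)\in E$. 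Hence every $m_{\alpha,F}$ splits in $E$, and since trivially $E=F(E)$, the field $E$ is a splitting field of $\mathcal{F}$ over $F$.

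The only genuinely non-routine step is the Zorn's lemma argument in (i), which is needed precisely because $E/F$ may be infinite-dimensional and a naive induction on degree is unavailable. Everything else is a direct manipulation of the isomorphism $F(\alpha)\cong F[X]/(m_{\alpha,F})$ combined with the algebraic closedness of $L$, and the deduction of (ii) from (i) proceeds in the standard way by testing each $\alpha\in E$ against its conjugates in $\bar F$.
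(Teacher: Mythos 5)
Your proof is correct and complete; note, however, that the paper does not actually prove this theorem---it presents it as a collection of well-known facts and refers the reader to Roman's \emph{Field Theory} for the arguments. What you wrote (the identification $F(\alpha)\cong F[X]/(m_{\alpha,F})$ plus a Zorn's lemma bootstrap for (i), and the standard splitting-field permutation argument together with testing each $\alpha$ against its conjugates for (ii)) is precisely the textbook route the citation points to, so there is no divergence to report.
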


We need a slight generalization of a well known property of the Galois group. 

\begin{theorem}
Let $f$ be a monic polynomial over a field $F$, $\deg f >0$.~Then ${\rm Gal}(f)$ acts transitively on the set of all roots of the polynomial $f$ if and only if $f$ is a power of  some monic irreducible polynomial. \label{sep}
\end{theorem}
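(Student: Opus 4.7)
My plan is to handle the two directions separately, reducing everything to the observation that the distinct monic irreducible factors of $f$ in $F[X]$ partition the set of roots of $f$ into $\mathrm{Gal}(f)$-invariant subsets.

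For the implication $(\Leftarrow)$, suppose $f=g^k$ with $g$ monic irreducible. Since $f$ and $g$ have the same splitting field $L_f$ and hence the same Galois group, and since the set of roots of $f$ coincides with the set of roots of $g$, it suffices to show that $\mathrm{Gal}(L_f/F)$ acts transitively on the roots of $g$. Given roots $\alpha,\beta$ of $g$, I would invoke Theorem \ref{aa}(i) with $\sigma=\mathrm{id}_F$ and $m_{\alpha,F}=g$ (so $\beta$ is a root of $m_{\alpha,F}^\sigma=g$) to obtain an $F$-embedding $\bar\sigma\colon L_f\hookrightarrow \bar F$ with $\bar\sigma(\alpha)=\beta$. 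Because $L_f$ is a splitting field over $F$, it is normal, so Theorem \ref{aa}(ii) forces $\bar\sigma$ to be an $F$-automorphism of $L_f$, i.e.\ an element of $\mathrm{Gal}(f)$ carrying $\alpha$ to $\beta$.

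For the implication $(\Rightarrow)$, I would argue by contrapositive. Write $f=p_1^{e_1}\cdots p_r^{e_r}$ as a product of distinct monic irreducibles in $F[X]$ and assume $r\geq 2$. Pick a root $\alpha$ of $p_1$ and a root $\beta$ of $p_2$; both lie in the root set of $f$. Any $\sigma\in\mathrm{Gal}(f)$ fixes the coefficients of $p_1$, so $p_1(\sigma(\alpha))=\sigma(p_1(\alpha))=0$, which means $\sigma(\alpha)$ is again a root of $p_1$. But distinct monic irreducibles in $F[X]$ are coprime in $F[X]$, hence have no common root in any extension, so $\beta$ is not a root of $p_1$. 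Therefore no element of $\mathrm{Gal}(f)$ can send $\alpha$ to $\beta$, contradicting transitivity; thus $r=1$ and $f$ is a power of a monic irreducible.

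The only genuine point that needs verification is the coprimality of distinct monic irreducibles in $F[X]$, which ensures the Galois-invariant partitioning of the root set by irreducible factor. Once that is in place, both implications are short, and the extension-lifting apparatus of Theorem \ref{aa} handles the transitivity in the irreducible case in the standard way, with no separability hypothesis required.
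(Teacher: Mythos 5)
Your proposal is correct and follows essentially the same route as the paper: the $(\Leftarrow)$ direction is identical (extend the identity via Theorem \ref{aa}(i) so that $\alpha\mapsto\beta$, then use normality of $L_f$ and Theorem \ref{aa}(ii) to land in ${\rm Gal}(f)$), and your contrapositive for $(\Rightarrow)$ is just the paper's direct argument turned around, resting on the same fact that ${\rm Gal}(f)$ permutes the roots of each irreducible factor among themselves while distinct monic irreducibles share no root.
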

\begin{proof}
Assume that ${\rm Gal}(f)$ acts transitively on the set $Z_f$ of all roots of the polynomial $f$.~Let $f_1,\dots, f_s\in F[X]$ be all distinct irreducible factors of $f$  (all of them are monic).~Take $r_i\in Z_{f_i}$, $r_j\in Z_{f_j}$.~Then $r_i,r_j\in Z_f$ and according to our assumption there exists an automorphism $\sigma \in {\rm Gal}(f)$ such that $\sigma(r_i)=r_j$.~Thus $0=\sigma(f_i(r_i))=f_i(r_j)$.~It follows that $f_j|f_i$, so $f_i=f_j$.~This implies that $f$ is a power of a monic irreducible polynomial.

Conversely, assume that $f$ is a power of a monic irreducible polynomial $g\in F[X]$.~Then $Z_f=Z_g$ and $g$ is the minimal polynomial of every element of $Z_f$.~Take $r_i,r_j\in Z_f$.~Since the extension $F<L_f$  is algebraic, the identity $F\hookrightarrow \overline{F}$ can be extended to an $F$--embedding $\sigma\colon L_f\hookrightarrow \overline{F}$ such that $\sigma(r_i)=r_j$ (Theorem \ref{aa}(i)).~According to the normality of the extension $F<L_f$, the $F$--embedding $\sigma$ must be an element of the group ${\rm Gal}(f)$ (Theorem \ref{aa}(ii)).~Therefore ${\rm Gal}(f)$ acts transitively on the set $Z_f$.
\end{proof}
\section{Main theorem}
Let $k$ be a field and ${\rm Res}_Y(f,g)$ denote  the  resultant of polynomials $f,g\in k[Y,T]$ with respect to the variable $Y$. 

\begin{theorem}
Let $f, g \in k[Y]$ be monic.~If $g$ is irreducible in the ring $k[Y]$ then the~polynomial 
$h=(-1)^{\deg g}\textnormal{Res}_Y(g,f-T)\in k[T]$  is a power of some irreducible polynomial.\label{main}
\end{theorem}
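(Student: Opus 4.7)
The plan is to reduce the statement to Theorem \ref{sep} by exhibiting transitive action of the Galois group of $h$ on its roots, transporting transitivity across from the Galois group of $g$.

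First I would use the standard Poisson-style factorization of the resultant. Writing $g(Y)=\prod_{i=1}^{m}(Y-\beta_i)$ in $\overline{k}[Y]$, where $m=\deg g$ and $\beta_i$ are the roots of $g$ (possibly with repetitions if $g$ is inseparable), and observing that $f-T$ is monic in $Y$ of degree $\deg f$, one obtains
\[
\operatorname{Res}_Y(g,f-T)=\prod_{i=1}^{m}\bigl(f(\beta_i)-T\bigr),
\]
so that $h=(-1)^{\deg g}\operatorname{Res}_Y(g,f-T)=\prod_{i=1}^{m}\bigl(T-f(\beta_i)\bigr)$. Thus $h$ is monic in $k[T]$ of positive degree, and the set $Z_h$ of its distinct roots is contained in $\{f(\beta_1),\dots,f(\beta_m)\}\subset L_g$, where $L_g$ is the splitting field of $g$ over $k$. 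Consequently the splitting field $L_h$ of $h$ over $k$ satisfies $L_h\subseteq L_g$.

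Next I would apply Theorem \ref{sep} in the easy direction: since $g$ is irreducible, ${\rm Gal}(g)={\rm Gal}(L_g/k)$ acts transitively on the set $Z_g$ of distinct roots of $g$. Given two distinct roots $f(\beta_i),f(\beta_j)\in Z_h$, one has $\beta_i\ne\beta_j$, so there exists $\tau\in{\rm Gal}(L_g/k)$ with $\tau(\beta_i)=\beta_j$. Since $f$ has coefficients in $k$, fixed by $\tau$, we get $\tau(f(\beta_i))=f(\tau(\beta_i))=f(\beta_j)$. To transport $\tau$ into ${\rm Gal}(h)$, I restrict it to $L_h$: the map $\tau|_{L_h}\colon L_h\hookrightarrow \overline{k}$ is a $k$-embedding, and because $L_h$ is a splitting field over $k$ and hence normal, Theorem \ref{aa}(ii) guarantees $\tau|_{L_h}\in{\rm Gal}(L_h/k)={\rm Gal}(h)$. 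This element sends $f(\beta_i)$ to $f(\beta_j)$, proving transitivity of ${\rm Gal}(h)$ on $Z_h$.

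Finally, invoking Theorem \ref{sep} in the converse direction yields that $h$ is a power of a monic irreducible polynomial, as required. The only place that needs care is the resultant identity and its specialization when $g$ has repeated roots (in positive characteristic); the formula above remains valid because $f-T$ is monic in $Y$, and the argument does not depend on separability of $g$ since the transitivity statements refer to the \emph{sets} of distinct roots. This is the step I would most want to double-check, but beyond it the proof is a clean two-step application of Theorem \ref{sep}.
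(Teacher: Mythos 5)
Your proof is correct and follows essentially the same route as the paper's: factor $h=\prod_i\bigl(T-f(\beta_i)\bigr)$ over the roots of $g$, transport transitivity of ${\rm Gal}(g)$ on $Z_g$ to transitivity of ${\rm Gal}(h)$ on $Z_h$ by restricting automorphisms to the normal subextension $L_h\subseteq L_g$ via Theorem~\ref{aa}(ii), and apply Theorem~\ref{sep} in both directions. Your extra remark about repeated roots in the inseparable case is a point the paper glosses over, but it does not change the argument.
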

\begin{proof} 
Let $Z_g=\{y_1,\dots,y_m\}$.~Observe that  $h=\prod_{i=1}^m(T-f(y_i))$, so  $Z_h=\{f(y_i): i=1,\dots,m\}$.~Set $L_g:=k(y_1,\dots,y_m)$ and $L_h:=k(f(y_1),\dots,f(y_m))$.~It is obvious that $k\subset L_h\subset L_g\subset \bar{k}$.~Take $i,j\in\{1,\dots,m\}$.~Since the polynomial $g$ is irreducible,  Theorem~\ref{sep} implies that the action of ${\rm Gal}(g)$ on the set $Z_g$ is transitive.~It follows that $\sigma(y_i)=y_j$ for some $\sigma \in {\rm Gal}(g)$.~Therefore $\sigma|_{L_h}\colon L_h\hookrightarrow \overline{k}$ is a $k$--embedding.~The extension $k<L_h$ is normal, so according to Theorem \ref{aa}(ii), we have that $\sigma|_{L_h}$ is a~$k$--automorphism of $L_h$.~Thus $\tau:=\sigma|_{L_h}\in  {\rm Gal}(h)$ and $\tau(f(y_i))=\sigma(f(y_i))=f(\sigma(y_i))=f(y_j)$.~It means that ${\rm Gal}(h)$ acts transitively on the set $Z_h$ and by Theorem \ref{sep}   the statement follows. 
\end{proof}

Now, we present some examples connected with the converse theorem.

The first example shows that, in general, the converse to Theorem \ref{main} does not hold.
\begin{ex}
Let $f=Y^2-X^3\in \mathbb{C}((X))[Y]$ and $g=(Y^2-X^3)^2-X^7\in\mathbb{C}((X))[Y]$. Then $h=(T^2-X^7)^2\in\mathbb{C}((X))[T]$ is the square of the irreducible polynomial, but $g$ has two irreducible factors in $\mathbb{C}((X))[Y]$ (see \cite{abh}). (Here $\mathbb{C}((X))$ denotes the quotient field of the ring $\mathbb{C}[[X]]$ of formal power series.)
\end{ex}

 If  we assume that $h$ is irreducible, then the converse to Theorem \ref{main} holds. 
 
 \begin{cor}
 Let $f,g\in k[Y]$ be monic.~If $h=(-1)^{\deg g}\textnormal{Res}_Y(g,f-T)\in k[T]$ is irreducible, then $g$ is also irreducible.\label{cor}
 \end{cor}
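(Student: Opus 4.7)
The plan is to argue by contrapositive: assume $g$ is reducible in $k[Y]$ and show that $h$ must be reducible in $k[T]$.

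Suppose $g = g_1 g_2$ with $g_1, g_2 \in k[Y]$ monic and of positive degree. The resultant is multiplicative in its first argument, so
\[
\textnormal{Res}_Y(g_1 g_2,\, f-T) \;=\; \textnormal{Res}_Y(g_1, f-T)\cdot \textnormal{Res}_Y(g_2, f-T).
\]
Setting $h_i := (-1)^{\deg g_i}\textnormal{Res}_Y(g_i, f-T) \in k[T]$ and using $\deg g = \deg g_1 + \deg g_2$, I immediately obtain the factorization $h = h_1 h_2$ in $k[T]$.

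The second step is to verify that this factorization is nontrivial, i.e.\ that each $h_i$ is non-constant. Letting $y_{i,1},\dots,y_{i,m_i}\in\bar k$ denote the roots of $g_i$ (with multiplicity), where $m_i = \deg g_i \ge 1$, the same product expansion used at the beginning of the proof of Theorem~\ref{main} yields
\[
h_i \;=\; \prod_{j=1}^{m_i}\bigl(T - f(y_{i,j})\bigr),
\]
which is monic in $T$ of degree $m_i \ge 1$. Hence $h = h_1 h_2$ is a nontrivial factorization, contradicting the irreducibility of $h$.

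I do not expect a real obstacle here: the argument is essentially the multiplicativity of the resultant together with a degree count. The only point requiring a bit of care is the bookkeeping of the sign $(-1)^{\deg g}$, which distributes correctly over the factorization because $\deg g_1 + \deg g_2 = \deg g$; once this is noted, the fact that each $h_i$ is monic of positive degree makes the nontriviality of the factorization automatic.
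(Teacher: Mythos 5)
Your proof is correct and follows essentially the same route as the paper: argue by contrapositive, factor $g$, apply multiplicativity of the resultant in its first argument, and observe that the induced factorization of $h$ is nontrivial. The only difference is cosmetic --- the paper factors $g$ into irreducibles and invokes Theorem~\ref{main} to see that each factor of $h$ is non-constant, whereas you verify non-constancy directly from the product formula $h_i=\prod_{j}\bigl(T-f(y_{i,j})\bigr)$, which is arguably a bit cleaner.
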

 \begin{proof}
Assume that  $g=g_1 \cdots g_s$, where $k>1$ and $g_1, \dots, g_s\in k[Y]$ are monic and irreducible. Then $$h=(-1)^{\deg g}{\rm Res}_Y(g_1,f-T)\cdots {\rm Res}_Y(g_s,f-T).$$ Since $g_1,\dots,g_s$ are monic and irreducible over $k$, Theorem \ref{main} implies that each \linebreak ${\rm Res}_Y(g_i,f-T)$ is a power of some irreducible polynomial.~This means that $h$ is reducible in $k[T]$.
 \end{proof}
 Consider the following example.
 \begin{ex}
 Let $f=Y^2-X^3$ and $g=(Y^2-X^3)^2-X^5Y$ be polynomials over the field $\mathbb{C}((X))$. Let $w(i,j):=4i+13j$ be a weight. Then the initial quasi-homogeneous part of $h=T^4-X^{10}T-X^{13}\in \mathbb{C}[[X,T]]$ is equal to $T^4-X^{13}$. Since the integers $4$ and $13$ are coprime, the polynomial $T^4-X^{13}$ is irreducible in the ring $\mathbb{C}[X,T]$. Therefore  Hensel's Lemma (see \cite[Lemma A1]{luengo}) implies that   $h$ is irreducible in the ring $\mathbb{C}((X))[T]$.~By Corollary \ref{cor} the polynomial $g$ is irreducible over $\mathbb{C}((X))$. 
 \end{ex}
 \begin{rem}
 Polynomials $g_1=(Y^2-X^3)^2-X^7$ and $g_2=(Y^2-X^3)^2-X^5Y$ are taken from \cite{abh}. Both were proposed by Tzee-Char Kuo.
 \end{rem}
 
 {\bf Acknowledgements.}~The author would like to thank Professors Evelia Rosa García Barroso, Janusz Gwoździewicz and Kamil Rusek for  useful comments and helpful suggestions concerning this paper.

\end{document}